\documentclass[12pt]{amsart}
\usepackage{amssymb}
\usepackage{amsmath}
\usepackage{amsfonts}

\setcounter{MaxMatrixCols}{10}

\newtheorem{theorem}{Theorem}
\theoremstyle{plain}

\newtheorem{corollary}{Corollary}

\newtheorem{definition}{Definition}

\newtheorem{lemma}{Lemma}

\newtheorem{proposition}{Proposition}
\newtheorem{remark}{Remark}

\numberwithin{equation}{section}
\input{tcilatex}

\begin{document}
\title[Some integral formulae on weighted manifolds]{Some integral formulae
on weighted manifolds}
\author{Mohammed Abdelmalek}
\address[A. One ]{ Ecole Sup\'{e}rieure de Management. Tlemcen Ageria.}
\email{abdelmalekmhd@yahoo.fr}
\author{Mohammed Benalili}
\curraddr[A. Two]{Dept. Mathematics, Faculty of Sciences BP 119 Universit%
\'{e} UABB Tlemcen. Algeria.}
\email{m\_benalili@yahoo.fr}
\subjclass[2020]{Primary 53C21, 5342.}
\keywords{weighted manifolds, weighted $\sigma _{r}$-mean curvatures,
weighted Newtons transformations}

\begin{abstract}
Introducing a notion of the weighted$\ $mean $\sigma _{r}^{\infty }-$%
curvature and using the weighted Newton transformations we derive in this
paper some integral formulae on weighted manifolds. These formulae
generalize the flux formula and some of its examples of applications
obtained by Alias, de Lira and Malacarne \cite{alm}.
\end{abstract}

\maketitle

\section{Introduction}

Several works have been done in the past to study the geometric properties
of constant $k$-mean curvature hypersurfaces in space forms, where the $k$%
-mean curvature is defined as the $k$-th elementary symmetric function of
the eigenvalues of the second fundamental form see (\cite{alm}). Motivated
in part by connections with the Ricci flow, much works have also been done
on geometric properties of manifolds and hypersurfaces when the manifold is
endowed with a "weighted" volume element; i.e. one integrates using the
smooth measure $e^{-f}dvol_{g}$ for $dvol_{g}$ the Riemannian volume element
of the metric $g$ see ( \cite{C}, \cite{Co}, \cite{cr}, \cite{e}, \cite{m}
). In this work, we follow Case (\cite{C}) to introduce the notion of
weighted $k$-mean curvature and using the weighted Newtons transformations
introduced in (\cite{C}) we obtain an integral formula on weighted manifolds
and give some applications. This latter formula was first introduced by
Kusner in (\cite{K}) and nowadays it's called the flux formula. Then it was
extended to $k$-curvature in a nice paper by Al\'{\i}as, de Lira, and
Malacarne see (\cite{alm}). Where they studied the properties of certain
geometrical configurations, more particularly they established a flux
formula and gave examples of geometric applications. Our paper extends some
properties obtained by the authors cited above for weighted manifolds.

\section{Preliminaries}

In this section we fix the notations and recall some definitions and
properties of the weighted symmetric functions and the weighted Newton
transformations: for more details see (\cite{C},\cite{m}).

Given a complete $n$-dimensional Riemannian manifold $(M,\left\langle
,\right\rangle )$ and a smooth function $f:M\longrightarrow 
\mathbb{R}
.$ The weighted manifold $M_{f}$ associate to $M$ is the triplet $%
(M,\left\langle ,\right\rangle ,dv_{f}),$where $dv_{f}=e^{-f}dv$ and $dv$ is
the standard volume element of $M.$

\bigskip Consider the tensional connection 
\begin{equation*}
\widetilde{\nabla }_{X}Y=\nabla _{X}Y+\left\langle X,Y\right\rangle
V-\left\langle V,Y\right\rangle X
\end{equation*}%
where $V=\left\langle \nabla f,\nu \right\rangle \nu $, $\nu $ is a vector
field on $M$ \ orthogonal to $X$ and $\nabla $ stands for the covariant
derivative on $M.$ This connection is one of the three basic types of metric
connections introduced by Elie Cartan. It was studied by I. Agricola and M.
Kraus \cite{ak}.

If $B$ and $\widetilde{B}$ are second forms on $M$ defined by $B\left(
X,Y\right) =\left\langle \nabla _{X}Y,\nu \right\rangle $ and $\widetilde{B}%
\left( X,Y\right) =\left\langle \widetilde{\nabla }_{X}Y,\nu \right\rangle $%
, we have 
\begin{equation*}
\widetilde{B}\left( X,Y\right) =B\left( X,Y\right) +\left\langle
X,Y\right\rangle \left\langle \nabla f,\nu \right\rangle
\end{equation*}%
An example of this situation is as follows: let$\ \psi _{t}:M^{n}\rightarrow
(\overline{M}^{n+1}$,$\left\langle ,\right\rangle )$ be a one family of
parameter of immersions of an $n-$dimensional manifold $M^{n}$ into an $n+1$%
- $\overline{M}^{n+1}$ Riemannian manifold where $\nabla $ is the connection
induced on $M^{n}$ and $\widetilde{\nabla }$ is the the tonsorial connection
on $M^{n}$, so the mean curvatures of $M$ with respective to the connections 
$\nabla $ and $\widetilde{\nabla }$ are related by%
\begin{equation*}
n\widetilde{H}=nH+\left\langle \nabla f,\nu \right\rangle
\end{equation*}%
which is the classical weighted mean curvature.

The important point is that hypersurfaces of constant weighted curvature
appear as critical points of certain weighted volume functionals. The
fundamental analogy with constant mean curvature hypersurfaces draws the
attention in this area. In terms of matrices we have 
\begin{equation*}
\widetilde{B}=B+\left\langle \nabla f,\nu \right\rangle I
\end{equation*}%
so if $\widetilde{\tau }_{i}$ and $\tau _{i}$ are the eigenvalues of $%
\widetilde{B}$ and $B$ respectively, we get%
\begin{equation*}
\widetilde{\tau }_{i}=\tau _{i}+\left\langle \nabla f,\nu \right\rangle .
\end{equation*}%
Now, putting $\lambda =\left\langle \nabla f,\nu \right\rangle ,$ we obtain 
\cite{abn} 
\begin{equation}
\widetilde{\sigma }_{k}=\sigma _{k}\left( \widetilde{B}\right) =\underset{j=0%
}{\overset{k}{\sum }}\binom{n-k+j}{j}\lambda ^{j}\sigma _{k-j}\left(
B\right) .  \label{1}
\end{equation}%
where $\widetilde{\sigma }_{k}$ stand for the symmetric functions of $%
\widetilde{B}.$

Let $\overline{M}$ be an $(n+1)$-dimensional Riemannian manifold, and $\psi
:M\longrightarrow \overline{M}$ \ be an isometrically immersed hypersurface
with $\triangledown $ and $\overline{\triangledown }$ the Levi-Civita
connections on $M$ and $\overline{M}$ respectively. The Weingarten formula
of this immersion is written as follows%
\begin{equation}
\text{ }AX=-\left( \overline{\triangledown }_{X}N\right) ^{\intercal } 
\notag
\end{equation}%
where $A$ is the shape operator of the hypersurface $M$ with respect to the
Gauss map $N,$ and $^{\top }$ denotes the orthogonal projection on the
tangent vector bundle of $M.$ As it is well known $A$ is a linear self
adjoint operator and at each point $p\in M,$ its eigenvalues $\mu
_{1},...,\mu _{n}$ are the principal curvatures of $M.$

Associate to the shape operator $A$ are the weighted elementary symmetric
functions $\sigma _{k}^{\infty }:%
\mathbb{R}
\times 
\mathbb{R}
^{n}\longrightarrow 
\mathbb{R}
$ defined recursively ( and introduced by Case in \cite{C} ) by 
\begin{equation}
\left\{ 
\begin{array}{l}
\sigma _{0}^{\infty }(\mu _{0},\mu )=1, \\ 
k\sigma _{k}^{\infty }(\mu _{0},\mu )=\sigma _{k-1}^{\infty }(\mu _{0},\mu )%
\underset{j=0}{\overset{n}{\dsum }}\mu _{j}+\underset{i=1}{\overset{k-1}{%
\dsum }}\underset{j=1}{\overset{n}{\dsum }}\left( -1\right) ^{i}\sigma
_{k-1-i}^{\infty }(\mu _{0},\mu )\mu _{j}^{i}\text{ \ \ \ \ \ for }k\geq 1%
\end{array}%
\right.  \notag
\end{equation}%
where $\mu _{0}\in 
\mathbb{R}
$ and $\mu =\left( \mu _{1},...,\mu _{n}\right) \in 
\mathbb{R}
^{n}$. In particular for $\mu _{0}=0$ we recover $\sigma _{k}^{\infty
}(0,\mu )=\sigma _{k}(\mu )$ the classical elementary symmetric functions
defined in \cite{r}.

\begin{definition}
(\cite{C})The weighted Newton transformations (W.N.T) $T_{k}^{\infty }(\mu
_{0},A)$ are defined inductively from $A$ by :%
\begin{equation}
\left\{ 
\begin{array}{l}
T_{k}^{\infty }(\mu _{0},A)=I \\ 
T_{k}^{\infty }(\mu _{0},A)=\sigma _{k}^{\infty }(\mu
_{0},A)I-AT_{k-1}^{\infty }(\mu _{0},A)\text{\ \ \ \ for }k\geq 1%
\end{array}%
\right.  \notag
\end{equation}%
or equivalently 
\begin{equation}
\text{ }T_{k}^{\infty }(\mu _{0},A)=\underset{j=0}{\overset{k}{\dsum }}%
\left( -1\right) ^{j}\sigma _{k-j}^{\infty }(\mu _{0},A)A^{j}  \notag
\end{equation}%
where $I$ stands for the identity on the Lie algebra of vector fields $%
\varkappa (M)$, $\sigma _{k}^{\infty }(\mu _{0},A)=\sigma _{k}^{\infty }(\mu
_{0},\mu _{1},...,\mu _{n})$ and $\mu _{1},...,\mu _{n}$ are the eigenvalues
of $A$.
\end{definition}

It should be noted that $T_{k}^{\infty }(0,A)=T_{k}(A)$ is the classical
Newton transformations introduced in \cite{r}.

These functions enjoy the nice following properties.

\begin{proposition}
\cite{C} For $\mu _{0},\mu _{1}\in 
\mathbb{R}
$ and $\mu \in 
\mathbb{R}
^{n}$, we have%
\begin{equation}
\sigma _{k}^{\infty }(\mu _{0}+\mu _{1},\mu )=\sum_{j=0}^{k}\frac{\mu
_{1}^{j}}{j!}\sigma _{k-j}^{\infty }(\mu _{0},\mu ).  \notag
\end{equation}%
In particular,%
\begin{equation}
\sigma _{k}^{\infty }(\mu _{1},\mu )=\sum_{j=0}^{k}\frac{\mu _{1}^{j}}{j!}%
\sigma _{k-j}\left( \mu \right)  \label{2}
\end{equation}%
\begin{equation}
\text{trace}(AT_{k}^{\infty }\left( (\mu _{0},\mu )\right) )=(k+1)\sigma
_{k+1}^{\infty }(\mu _{0},\mu )-\mu _{0}\sigma _{k}^{\infty }(\mu _{0},\mu ).
\label{2'}
\end{equation}%
For $i\in \left\{ 1,...,n\right\} $ we have%
\begin{equation}
\sigma _{k,i}^{\infty }(\mu _{0},\mu )=\sigma _{k}^{\infty }(\mu _{0},\mu
)-\mu _{i}\sigma _{k-1,i}^{\infty }\left( \mu _{0},\mu \right)  \notag
\end{equation}%
and the $i^{th}$ eigenvalue of $T_{k}^{\infty }(\mu _{0},\mu )$ is equal to $%
\sigma _{k,i}^{\infty }(\mu _{0},\mu )$ where $\sigma _{k,i}^{\infty }\left(
\mu _{0},\mu \right) =\sigma _{k}^{\infty }\left( \mu _{0},\mu _{1},...,\mu
_{i-1},\mu _{i+1},...,\mu _{n}\right) .$
\end{proposition}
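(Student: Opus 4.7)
The plan is to dispose of formulas (1) and (2) together, then use the explicit expansion of $T_k^\infty$ for the trace identity, and finally derive the splitting and eigenvalue statements in parallel. I would first prove (2) by induction on $k$. The base case $k=0$ is immediate since $\sigma_0^\infty = 1 = \sigma_0(\mu)$. For the inductive step I would apply the recursive definition of $\sigma_k^\infty$, substitute the hypothesis into each $\sigma_{k-j}^\infty$ on the right, and collapse the power-sum factors via the classical Newton identity $\sum_{j=1}^m(-1)^{j-1}\sigma_{m-j}(\mu)\,p_j = m\,\sigma_m(\mu)$, where $p_j = \sum_i \mu_i^j$. After an index shift the two resulting contributions combine to $k\sum_{i=0}^k \frac{\mu_0^i}{i!}\sigma_{k-i}(\mu)$, which gives (2) upon dividing by $k$. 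Formula (1) then follows from (2) by expanding $(\mu_0+\mu_1)^j$ via the binomial theorem and reindexing; equivalently, one may read it off from the generating-function identity $\sum_k \sigma_k^\infty(\mu_0,\mu)\,t^k = e^{\mu_0 t}\prod_i(1+\mu_i t)$, which (2) establishes.

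For the trace identity (2'), the approach is to use the explicit formula $T_k^\infty = \sum_{j=0}^k(-1)^j\sigma_{k-j}^\infty A^j$ to obtain $\operatorname{tr}(AT_k^\infty) = \sum_{j=0}^k(-1)^j\sigma_{k-j}^\infty p_{j+1}$, then reindex by $m=j+1$ and recognize the right-hand side as the non-$\mu_0$ part of a weighted Newton identity of the shape $(k+1)\sigma_{k+1}^\infty = \mu_0\sigma_k^\infty + \sum_{m=1}^{k+1}(-1)^{m-1}\sigma_{k+1-m}^\infty\,p_m$. This weighted Newton identity is itself essentially a clean rewriting of the recursive definition of $\sigma_{k+1}^\infty$ after isolating the $\mu_0$ term; deriving it in parallel with (2) makes the comparison automatic.

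For the splitting (4), I would substitute (2) into $\sigma_{k,i}^\infty(\mu_0,\mu) = \sigma_k^\infty(\mu_0,\mu_1,\ldots,\widehat{\mu_i},\ldots,\mu_n)$ and invoke the classical termwise identity $\sigma_m(\mu) = \sigma_{m,i}(\mu) + \mu_i\sigma_{m-1,i}(\mu)$ inside each summand; the two resulting sums repackage as $\sigma_k^\infty(\mu_0,\mu) - \mu_i\sigma_{k-1,i}^\infty(\mu_0,\mu)$. For (5), in a basis diagonalizing $A$ the operator $T_k^\infty$ is diagonal, with $i$-th entry $\sum_{j=0}^k(-1)^j\sigma_{k-j}^\infty \mu_i^j$; substituting the splitting (4) into each $\sigma_{k-j}^\infty$ produces two sums that telescope, leaving only $\sigma_{k,i}^\infty(\mu_0,\mu)$ after noting that the tail term vanishes by the convention $\sigma_{-1,i}^\infty = 0$.

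The main obstacle I anticipate is the bookkeeping around the recursive definition of $\sigma_k^\infty$: the scalar $\mu_0$ enters the power sums in an asymmetric way, so one must carefully separate its contribution both in the inductive proof of (2) and in the weighted Newton identity used to prove (2'). Once the generating-function expression $F(\mu_0,\mu,t) = e^{\mu_0 t}\prod_i(1+\mu_i t)$ has been verified, every remaining formula in the proposition becomes a routine manipulation of formal power series, so the substantive work is concentrated in the single induction step.
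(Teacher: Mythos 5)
Your argument is correct in all its parts, but there is nothing in the paper to compare it against: the proposition is quoted from Case \cite{C} and the paper supplies no proof of its own. Your route --- induction on $k$ for the special case (\ref{2}) using the recursive definition together with the classical Newton identity, the generating function $e^{\mu_0 t}\prod_i(1+\mu_i t)$ (equivalently the binomial theorem) for the addition formula, the expansion $T_k^\infty=\sum_{j=0}^k(-1)^j\sigma_{k-j}^\infty A^j$ plus a reindexing for the trace identity (\ref{2'}), and the termwise splitting $\sigma_m(\mu)=\sigma_{m,i}(\mu)+\mu_i\sigma_{m-1,i}(\mu)$ followed by telescoping for the eigenvalue statement --- is the standard one, and each step checks out. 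One point you should make explicit rather than leave implicit: the recursion as printed in the Preliminaries has inner summand $(-1)^i\sigma_{k-1-i}^\infty\mu_j^i$, i.e.\ it pairs $\sigma_{k-1-i}^\infty$ with the power sum $p_i=\sum_{j=1}^n\mu_j^i$; taken literally this contradicts (\ref{2}) already at $k=2$, where it yields $2\sigma_2^\infty=(\mu_0+p_1)^2-p_1$ instead of the required $(\mu_0+p_1)^2-p_2$. The ``weighted Newton identity'' $(k+1)\sigma_{k+1}^\infty=\mu_0\sigma_k^\infty+\sum_{m=1}^{k+1}(-1)^{m-1}\sigma_{k+1-m}^\infty p_m$, on which both your induction for (\ref{2}) and your proof of (\ref{2'}) rest, corresponds to the summand $(-1)^i\sigma_{k-1-i}^\infty\mu_j^{i+1}$, which is what Case's definition actually requires; since the whole induction hangs on this form of the recursion, you should state the corrected definition at the outset. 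With that caveat the proof is complete.
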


We can see by (\ref{1}) and (\ref{2}) that $\widetilde{\sigma }_{k}$ and $%
\sigma _{k}^{\infty }(\mu _{0},\mu )$ are polynomials of the same degree but
with slightly different coefficients, which differ by a multiplicative
constant only.

\begin{definition}
The weighted $k^{th}$ mean curvature $H_{k,f}$ is given by:%
\begin{equation}
\text{ }\left( 
\begin{array}{c}
n \\ 
k%
\end{array}%
\right) H_{k,f}=\sigma _{k}^{\infty }(\left\langle \nu ,\triangledown
f\right\rangle ,A)  \notag
\end{equation}%
\bigskip where $\nu $ is the unit outpointing vector field normal to $M$ in $%
\overline{M}.$
\end{definition}

\begin{remark}
In particular for $k=1$ and in view of formula \ref{2}, we get 
\begin{equation}
nH_{1,f}=\sigma _{1}^{\infty }(\left\langle \nu ,\triangledown
f\right\rangle ,A)=\sigma _{1}\left( A\right) +\left\langle \nu
,\triangledown f\right\rangle =nH+\left\langle \nu ,\triangledown
f\right\rangle  \notag
\end{equation}%
which is the classical definition of the weighted mean curvature of the
hypersurface $M$ studied by Gromov \cite{G}.
\end{remark}

To clarify this notion of curvature we will study the case $k=2$.

Consider a one family of parameter $\ \psi _{t}:M^{n}\rightarrow \overline{M}%
^{n+1}\left( c\right) $ of immersions of an $n-$dimensional closed manifold $%
M^{n}$ into an $n+1$ space form $\left( \overline{M}^{n+1}\text{,}%
\left\langle ,\right\rangle \right) $ of constant curvature $c$. Denote by $%
X $ the deformation vector field and by $\nu $ the normal vector field to $%
\overline{M}^{n+1}$. Put $\lambda =\left\langle X,\nu \right\rangle .$

Consider the variational problem%
\begin{equation}
\delta \left( \int_{M}\sigma _{1}^{\infty }dV_{f}\right) =0  \label{3.1}
\end{equation}%
that is to say%
\begin{eqnarray}
\delta \left( \int_{M}\sigma _{1}^{\infty }dV_{f}\right) &=&\frac{d}{dt}%
\left( \int_{M}\sigma _{1}^{\infty }dV_{f}\right)  \label{3.2} \\
&=&\frac{d}{dt}\left( \int_{M}\left( \sigma _{1}+\mu \right) dV_{f}\right) 
\notag \\
&=&\int_{M}\left( \frac{d\sigma _{1}}{dt}+\frac{d\mu }{dt}\right)
dV_{f}+\int_{M}\left( \sigma _{1}+\mu \right) \frac{d\left( dV_{f}\right) }{%
dt}.  \notag
\end{eqnarray}%
Now, by formula (9c) in \ page 469, we have%
\begin{equation*}
\frac{d\sigma _{1}}{dt}=\lambda \left( \sigma _{1}^{2}-2\sigma _{2}\right)
+\lambda _{,ii}+\sigma _{1,j}X^{j}+nc\lambda
\end{equation*}%
and by the well known fact $\frac{d}{dt}dV=(-\lambda \sigma
_{1}+X_{,j}^{j})dV$, we infer%
\begin{equation*}
\frac{d\left( dV_{f}\right) }{dt}=(-\lambda \sigma
_{1}-X^{j}f_{,j}+X_{,j}^{j})dV_{f}
\end{equation*}%
and 
\begin{equation*}
\frac{d\mu }{dt}=\left\langle \nabla \nabla _{X}f,\nu \right\rangle
-\left\langle \nabla f,\left[ \nu ,X\right] \right\rangle
\end{equation*}%
By the definition of the weighted divergence, we have $%
div_{f}X=-X^{j}f_{,j}+X_{,j}^{j}$ and replacing in (\ref{3.2}), we get

\begin{eqnarray*}
\frac{d}{dt}\left( \int_{M}\sigma _{1}^{\infty }dV_{f}\right)
&=&\int_{M}\left( \lambda \left( \sigma _{1}^{2}-2\sigma _{2}\right)
+\lambda _{,ii}+\sigma _{1,j}X^{j}++nc\lambda +\left\langle \nabla \nabla
_{X}f,\nu \right\rangle -\left\langle \nabla f,\left[ \nu ,X\right]
\right\rangle \right) dV_{f} \\
&&+\int_{M}\left( \sigma _{1}+\mu \right) (-\lambda \sigma
_{1}+div_{f}\left( X\right) )dV_{f} \\
&=&\int_{M}\left( -2\lambda \sigma _{2}+\lambda _{,ii}+\left\langle \nabla
\nabla _{X}f,\nu \right\rangle -\left\langle \nabla f,\left[ \nu ,X\right]
\right\rangle \right) dV_{f} \\
&&+\int_{M}\left( -\lambda \mu \sigma _{1}+\mu div_{f}\left( X\right)
+nc\lambda \right) dV_{f} \\
&=&\int_{M}\left( -\lambda \left( 2\sigma _{2}^{\infty }-\mu _{1}\sigma
_{1}^{\infty }\right) +\lambda _{,ii}+\left\langle \nabla \nabla _{X}f,\nu
\right\rangle -\left\langle \nabla f,\left[ \nu ,X\right] \right\rangle
\right) dV_{f} \\
&&+\int_{M}\left( \mu div_{f}\left( X\right) +nc\lambda \right) dV_{f}.
\end{eqnarray*}

we have

\begin{theorem}
The Euler-Lagrange equation corresponding to the problem (\ref{3.1}) is 
\begin{equation*}
-\lambda \left( 2\sigma _{2}^{\infty }-\mu _{1}\sigma _{1}^{\infty }\right)
+\lambda _{,ii}+\left\langle \nabla \nabla _{X}f,\nu \right\rangle
-\left\langle \nabla f,\left[ \nu ,X\right] \right\rangle +\mu div_{f}\left(
X\right) +nc\lambda =0.
\end{equation*}
\end{theorem}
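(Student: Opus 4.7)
The plan is to derive the first variation of the weighted $\sigma_1^\infty$-energy directly, collect all contributions into a single integrand, and then invoke the fundamental lemma of the calculus of variations. Using (\ref{2}) at $k=1$, I first rewrite $\sigma_1^\infty(\mu,A) = \sigma_1(A) + \mu$ with $\mu = \langle\nabla f,\nu\rangle$, so that the product rule decomposes the variation of $\int_M \sigma_1^\infty\, dV_f$ into three pieces coming respectively from $d\sigma_1/dt$, $d\mu/dt$ and $d(dV_f)/dt$.

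For each of these pieces I substitute a standard evolution identity. The classical first symmetric function in a space form of curvature $c$ evolves according to
\begin{equation*}
\frac{d\sigma_1}{dt} = \lambda(\sigma_1^2 - 2\sigma_2) + \lambda_{,ii} + \sigma_{1,j}X^j + nc\lambda,
\end{equation*}
a formula I would quote from the cited reference. The Riemannian volume form evolves by $\frac{d(dV)}{dt} = (-\lambda \sigma_1 + X_{,j}^j)\, dV$; multiplying by $e^{-f}$ and using the definition $\operatorname{div}_f X = X^j_{,j} - X^j f_{,j}$ of the weighted divergence converts this to $\frac{d(dV_f)}{dt} = (-\lambda\sigma_1 + \operatorname{div}_f X)\, dV_f$. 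Finally, $d\mu/dt$ is obtained by differentiating $\langle\nabla f,\nu\rangle$ along $\psi_t$: the variation of $\nabla f$ along $X$ yields $\langle\nabla\nabla_X f,\nu\rangle$, while the variation of $\nu$ produces a Lie-bracket correction $-\langle\nabla f,[\nu,X]\rangle$.

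Inserting these expressions into the product-rule expansion and multiplying out $(\sigma_1 + \mu)(-\lambda\sigma_1 + \operatorname{div}_f X)$ produces a $-\lambda\sigma_1^2$ term that cancels the $\lambda\sigma_1^2$ coming from $d\sigma_1/dt$. The remaining quadratic-curvature contribution is $-2\lambda\sigma_2 - \lambda\mu\sigma_1$, which I would rewrite via a second application of (\ref{2}), now at level $k=2$, as $-\lambda(2\sigma_2^\infty - \mu\,\sigma_1^\infty)$. The remaining terms $\lambda_{,ii}$, $\langle\nabla\nabla_X f,\nu\rangle$, $-\langle\nabla f,[\nu,X]\rangle$, $\mu\,\operatorname{div}_f X$ and $nc\lambda$ are then precisely those appearing in the statement, and the fundamental lemma applied to arbitrary variations yields the pointwise Euler-Lagrange equation.

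The main obstacle is the algebraic bookkeeping in the simplification step: one must carefully verify that the cross-terms from $(\sigma_1+\mu)(-\lambda\sigma_1+\operatorname{div}_f X)$ combine with the tangential piece $\sigma_{1,j}X^j$ in $d\sigma_1/dt$ so as to leave exactly the combination that (\ref{2}) regroups into $\sigma_2^\infty$ and $\sigma_1^\infty$. A secondary subtlety is that the variation field $X$ decomposes into normal and tangential parts; one should confirm that the tangential contributions reassemble into a weighted divergence whose integral vanishes, so that the resulting condition on the integrand is genuinely the correct Euler-Lagrange equation and not merely its normal component.
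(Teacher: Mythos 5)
Your proposal follows essentially the same route as the paper: the same decomposition $\sigma_1^\infty=\sigma_1+\mu$ with $\mu=\langle\nabla f,\nu\rangle$, the same three evolution identities (Reilly's formula for $d\sigma_1/dt$, the evolution of $dV_f$, and the bracket formula for $d\mu/dt$), the same cancellation of $\lambda\sigma_1^2$, and the same regrouping of $-2\lambda\sigma_2-\lambda\mu\sigma_1$ into $-\lambda\left(2\sigma_2^\infty-\mu\,\sigma_1^\infty\right)$ via (\ref{2}). You also correctly identify the one step the paper passes over silently, namely that $\sigma_{1,j}X^j+\sigma_1\operatorname{div}_f X=\operatorname{div}_f(\sigma_1 X)$ integrates to zero on the closed manifold, which is exactly how those tangential terms disappear from the final integrand.
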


To clarify the idea we will consider simpler cases: \ put $f(x)=\frac{1}{2}%
\left\Vert x\right\Vert ^{2.}$, and $X=g\nu $, where $g$ is a $C^{2}$%
-function, so $\lambda =g$ and $\mu =\left\langle x,\nu \right\rangle $ the
support function. We get 
\begin{eqnarray*}
div_{f}\left( X\right) &=&-\left( g\nu ^{j}\right) f_{,j}+\left( g\nu
^{j}\right) _{,j} \\
&=&-g\left\langle x,\nu \right\rangle +g_{j}\nu ^{j}+g\nu _{,j}^{j} \\
&=&-g\mu +\nu (g)-g\sigma _{1} \\
&=&g\sigma _{1}^{\infty }+\nu \left( g\right) ,
\end{eqnarray*}%
\begin{eqnarray*}
\int_{M}g_{,ii}dV_{f} &=&\int_{M}g\left( f_{,ii}-f_{i}f^{,i}\right) dV_{f} \\
&=&\int_{M}g\left( n-2f\right) dV_{f}
\end{eqnarray*}%
and%
\begin{equation*}
\left\langle \nabla \nabla _{X}f,\nu \right\rangle -\left\langle \nabla f, 
\left[ \nu ,X\right] \right\rangle =g\nu \left( \mu _{1}\right) .
\end{equation*}%
Hence%
\begin{eqnarray*}
\frac{d}{dt}\left( \int_{M}\sigma _{1}^{\infty }dV_{f}\right)
&=&\int_{M}\left( -g\left( 2\sigma _{2}^{\infty }-2\mu \sigma _{1}^{\infty
}\right) +g\left( n-2f\right) +g\nu \left( \mu \right) \right) dV_{f} \\
&&+\int_{M}\mu \nu \left( g\right) dV_{f}
\end{eqnarray*}%
and since 
\begin{eqnarray*}
\int_{M}\mu \nu \left( g\right) dV_{f} &=&-\int_{M}\nu (\mu e^{-f})gdV \\
&=&-\int_{M}g\left( \nu \left( \mu \right) -\mu ^{2}\right) dV_{f}
\end{eqnarray*}%
we deduce%
\begin{equation*}
\frac{d}{dt}\left( \int_{M}\sigma _{1}^{\infty }dV_{f}\right) =\int_{M}g%
\left[ -2\sigma _{2}^{\infty }+2\mu \sigma _{1}^{\infty }-2f+\mu
^{2}+n\left( 1+c\right) \right] dV_{f}\text{.}
\end{equation*}

\begin{corollary}
Under the above assumptions the Euler-Langrange equation of the problem (\ref%
{3.1}) is given by 
\begin{equation*}
-2\sigma _{2}^{\infty }+2\mu \sigma _{1}^{\infty }-2f+\mu ^{2}+n\left(
1+c\right) =0.
\end{equation*}
\end{corollary}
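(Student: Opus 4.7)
The plan is to read the corollary as the Euler--Lagrange equation derived by specializing Theorem~1 to the explicit choices $f(x)=\tfrac{1}{2}\|x\|^{2}$ and $X=g\nu$, and then invoking the fundamental lemma of the calculus of variations: since the variation $g$ is an arbitrary $C^{2}$ function, the integrand in $\delta\bigl(\int_{M}\sigma_{1}^{\infty}\,dV_{f}\bigr)$ must vanish pointwise.

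First, I would collect the kinematic data attached to these choices. With $X=g\nu$, one has $\lambda=\langle X,\nu\rangle=g$ and, using $\nabla f=x$, the weighted quantity $\mu=\langle\nabla f,\nu\rangle$ becomes the support function $\langle x,\nu\rangle$. I would then reproduce, in order, the three computations already displayed in the excerpt but treat them as separate lemmas to be invoked: (i) the weighted divergence identity $\operatorname{div}_{f}(g\nu)=g\sigma_{1}^{\infty}+\nu(g)$, obtained by expanding $(g\nu^{j})_{,j}-(g\nu^{j})f_{,j}$ and using $\nu^{j}_{,j}=-\sigma_{1}$ together with $\sigma_{1}^{\infty}=\sigma_{1}+\mu$ turned into $-\sigma_{1}-\mu$ via the sign on $\mu$; (ii) the Bochner-type identity $\int_{M}g_{,ii}\,dV_{f}=\int_{M}g(n-2f)\,dV_{f}$, which follows from weighted integration by parts applied twice and from $f_{,ii}=n$, $f_{i}f^{,i}=2f$ for the chosen $f$; and (iii) the bracket calculation $\langle\nabla\nabla_{X}f,\nu\rangle-\langle\nabla f,[\nu,X]\rangle=g\,\nu(\mu)$, which is a direct unwinding of the Lie bracket with $X=g\nu$ and $\nabla f=x$.

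Next I would substitute these three identities into the general Euler--Lagrange integrand of Theorem~1 and simplify the result, obtaining
\begin{equation*}
\frac{d}{dt}\!\left(\int_{M}\sigma_{1}^{\infty}\,dV_{f}\right)
=\int_{M}\!\Bigl[-g\bigl(2\sigma_{2}^{\infty}-2\mu\sigma_{1}^{\infty}\bigr)
+g(n-2f)+g\,\nu(\mu)\Bigr]dV_{f}
+\int_{M}\mu\,\nu(g)\,dV_{f}.
\end{equation*}
The single remaining subtlety is the last term, which still carries a derivative of $g$. I would dispose of it by weighted integration by parts on the normal derivative: writing $\mu\,\nu(g)\,dV_{f}=\mu\,\nu(g)\,e^{-f}dV$ and moving $\nu$ onto $\mu e^{-f}$, one picks up $-\nu(\mu)+\mu\,\nu(f)=-\nu(\mu)+\mu^{2}$, yielding
\begin{equation*}
\int_{M}\mu\,\nu(g)\,dV_{f}=-\int_{M}g\bigl(\nu(\mu)-\mu^{2}\bigr)\,dV_{f}.
\end{equation*}

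Finally, adding the two integrals cancels the $g\,\nu(\mu)$ term and, after inserting the ambient curvature contribution $nc\lambda=ncg$ inherited from Theorem~1, leaves
\begin{equation*}
\delta\!\left(\int_{M}\sigma_{1}^{\infty}\,dV_{f}\right)
=\int_{M}g\Bigl[-2\sigma_{2}^{\infty}+2\mu\sigma_{1}^{\infty}-2f+\mu^{2}+n(1+c)\Bigr]dV_{f}.
\end{equation*}
Because $g\in C^{2}(M)$ is arbitrary, the bracketed expression vanishes identically, which is precisely the stated equation. The main obstacle is bookkeeping rather than conceptual: making sure that the sign conventions in the weighted divergence, the identification $\sigma_{1}^{\infty}=\sigma_{1}+\mu$, and the boundary-free integration by parts of the $\mu\,\nu(g)$ term are consistently applied, so that the $\nu(\mu)$ terms cancel cleanly and leave only the claimed pointwise identity.
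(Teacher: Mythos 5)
Your proposal follows the paper's own derivation essentially verbatim: the same three auxiliary identities for $\operatorname{div}_{f}(g\nu)$, $\int_{M}g_{,ii}\,dV_{f}$, and the bracket term, the same integration by parts on $\int_{M}\mu\,\nu(g)\,dV_{f}$ to eliminate the $\nu(\mu)$ terms, and the same final reduction to a single integrand multiplied by the arbitrary function $g$. The only cosmetic difference is that you make the appeal to the fundamental lemma of the calculus of variations explicit, which the paper leaves implicit.
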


Example: Consider a hypersurface $M_{n}$ of the unit sphere $\ S^{n+1}$. \
The Euler-Lagrange is then written%
\begin{equation*}
-\sigma _{2}^{\infty }+\sigma _{1}^{\infty }+n=0
\end{equation*}%
or equivalently%
\begin{equation*}
\left( 2-\underset{j=0}{\dsum^{n}\mu _{j}}\right) \sigma _{1}^{\infty }+2n+%
\underset{j=0}{\overset{n}{\dsum }}\mu _{j}=0
\end{equation*}%
where $\mu _{j}$ stand for the eigenvalues of the second fundamental form.

Or%
\begin{equation*}
\left( \sigma _{1}^{\infty }\right) ^{2}-4\sigma _{1}^{\infty }-2n-1=0
\end{equation*}%
which gives 
\begin{equation*}
\sigma _{1}^{\infty }=2\pm \sqrt{2n+3}
\end{equation*}%
or%
\begin{equation*}
\sigma _{1}=1\pm \sqrt{2n+3}.
\end{equation*}%
We can cite a candidate to our situation: Cliffor torus in $S^{n+1}$i.e. $%
M^{n}$ is a product of spheres $S^{n_{1}}(r_{1})\times S^{n_{2}}(r_{2})$ , $%
n_{1}+n_{2}=n$ of appropriate radii $r_{1}$, $r_{2}.$ An $H(r)$-torus in $%
S^{n+1}$ is obtained by the canonical immersions $S^{n-1}\left( r\right)
\subset R^{n},S^{1}(\sqrt{1-r^{2}})\subset R^{2}$, $0<r<1$, \ as $%
S^{n-1}\left( r\right) \times S^{1}(\sqrt{1-r^{2}})\subset S^{n+1}$. The
principal curvatures are given, for a chosen orientation, by 
\begin{equation*}
\mu _{1}=...=\mu _{n-1}=\frac{\sqrt{1-r^{2}}}{r}\text{, }\mu _{n}=-\frac{r}{%
\sqrt{1-r^{2}}}
\end{equation*}%
so for the $H(r)$-torus 
\begin{equation*}
\sigma _{1}=\left( n-1\right) \frac{\sqrt{1-r^{2}}}{r}-\frac{r}{\sqrt{1-r^{2}%
}}=\frac{n(1-r^{2})-1}{r\sqrt{1-r^{2}}}.
\end{equation*}%
For the torus to be an example we have to show that 
\begin{equation}
\frac{n(1-r^{2})-1}{r\sqrt{1-r^{2}}}=1+\sqrt{2n+3}  \tag{3.3}
\end{equation}%
has a root. For this, we consider the continuous function $\phi \left(
r\right) =-(1+\sqrt{2n+3})$ with $0<r<\frac{n-1}{n}$ and $n\geq 2.$ We
notice that $\lim_{r\rightarrow 0^{+}}\phi (r)=+\infty $ and $\phi \left( 
\frac{n-1}{n}\right) =\frac{n}{\sqrt{2n+1}}-1-\sqrt{2n+3}<0.$ Consequently
the equation (\ref{3.3}) admits at least one root.

\begin{definition}
We say that an hypersurface $M$ of $\overline{M}$ is $\sigma _{r}^{\infty }$%
-minimal, if $H_{r,f}=0.$ In particular $M$ is $f$-minimal if $H_{f}=-\frac{1%
}{n}\left\langle \nu ,\triangledown f\right\rangle .$
\end{definition}

Here we need $\mu _{0}=\left\langle \nabla f,\nu \right\rangle $. For the
safe of brevity, we put $T_{k}^{\infty }=T_{k}^{\infty }(\mu _{0},A)$ and $%
\sigma _{k}^{\infty }=\sigma _{k}^{\infty }(\mu _{0},A).$

The weighted divergence of the weighted Newton transformations is define by 
\begin{equation}
\func{div}_{f}T_{k}^{\infty }=e^{f}\func{div}\left( e^{-f}T_{k}^{\infty
}\right)  \notag
\end{equation}%
where%
\begin{equation}
\func{div}\left( T_{k}^{\infty }\right) =\text{trace}\left( \triangledown
T_{k}^{\infty }\right) =\underset{j=0}{\overset{k}{\dsum }}\triangledown
_{e_{i}}\left( T_{k}^{\infty }\right) \left( e_{i}\right)  \notag
\end{equation}%
and $\left\{ e_{1},...,e_{n}\right\} $\ is an orthonormal basis of the
tangent space of $M.$

\begin{lemma}
\label{lem1'}%
\begin{equation*}
tr\left( T_{k-1}^{\infty }\circ \nabla _{v}A\right) =\left\langle \nabla
\sigma _{k}^{\infty }(\mu _{1},A)-\sigma _{k-1}^{\infty }\left( \mu
_{1},A\right) \nabla \mu _{1},v\right\rangle .
\end{equation*}
\end{lemma}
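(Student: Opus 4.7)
\medskip

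\noindent\textbf{Proof proposal.} The plan is to differentiate the explicit formula \eqref{2} in the direction $v$ and compare the two sides; the whole identity should reduce to the classical Newton-transformation differentiation formula $\nabla_v \sigma_k(A)=\operatorname{tr}(T_{k-1}(A)\,\nabla_v A)$ applied term-by-term.

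First, I would use \eqref{2} to write
$$
\sigma_k^{\infty}(\mu_1,A)=\sum_{j=0}^{k}\frac{\mu_1^{\,j}}{j!}\,\sigma_{k-j}(A).
$$
Viewing $\mu_1$ and $A$ as independent arguments and applying the chain rule, the directional derivative splits as
$$
\nabla_v \sigma_k^{\infty}(\mu_1,A)=\underbrace{\sum_{j=1}^{k}\frac{j\,\mu_1^{\,j-1}}{j!}\sigma_{k-j}(A)\,\nabla_v\mu_1}_{=\,\sigma_{k-1}^{\infty}(\mu_1,A)\,\nabla_v\mu_1}\;+\;\sum_{j=0}^{k-1}\frac{\mu_1^{\,j}}{j!}\,\nabla_v\sigma_{k-j}(A),
$$
where the first bracket collapses by re-indexing $i=j-1$ to $\sigma_{k-1}^{\infty}(\mu_1,A)$. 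This already isolates the ``error term'' that appears on the right-hand side of the lemma.

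Next I would handle the second sum by the classical (unweighted) identity $\nabla_v \sigma_{k-j}(A)=\operatorname{tr}(T_{k-1-j}(A)\,\nabla_v A)$, pulling the trace out:
$$
\sum_{j=0}^{k-1}\frac{\mu_1^{\,j}}{j!}\,\nabla_v\sigma_{k-j}(A)=\operatorname{tr}\!\left(\Bigl[\sum_{j=0}^{k-1}\frac{\mu_1^{\,j}}{j!}\,T_{k-1-j}(A)\Bigr]\circ\nabla_v A\right).
$$
The remaining task is to recognise the bracket as $T_{k-1}^{\infty}(\mu_1,A)$. For this I would insert the polynomial form $T_{k-1}^{\infty}(\mu_1,A)=\sum_{\ell=0}^{k-1}(-1)^{\ell}\sigma_{k-1-\ell}^{\infty}(\mu_1,A)\,A^{\ell}$ from Definition~1, substitute \eqref{2} inside, and swap the order of summation (letting $i$ and $\ell$ range over $i+\ell\le k-1$); collecting the inner $\ell$-sum yields precisely $T_{k-1-i}(A)$, giving
$$
T_{k-1}^{\infty}(\mu_1,A)=\sum_{i=0}^{k-1}\frac{\mu_1^{\,i}}{i!}\,T_{k-1-i}(A).
$$
Combining with the previous display and moving the $\sigma_{k-1}^{\infty}(\mu_1,A)\,\nabla_v\mu_1$ term to the left, I obtain the stated identity after rewriting $\nabla_v u=\langle\nabla u,v\rangle$.

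The only real obstacle is the bookkeeping in the double sum that identifies the bracket with $T_{k-1}^{\infty}(\mu_1,A)$; everything else is a straightforward application of the chain rule and the classical Reilly-type formula for the ordinary symmetric functions. Alternatively, one could avoid this reindexing by inducting on $k$, using the recursion $T_{k}^{\infty}=\sigma_k^{\infty}I-A\,T_{k-1}^{\infty}$ directly, but the term-by-term expansion above seems cleaner.
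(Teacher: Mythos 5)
Your argument is correct, and it rests on the same two pillars as the paper's proof: the expansion $\sigma_k^{\infty}(\mu_1,A)=\sum_{j=0}^{k}\frac{\mu_1^{j}}{j!}\sigma_{k-j}(A)$ from (\ref{2}) and the reduction to the classical elementary symmetric functions. The difference is in how the trace is identified. The paper never states your operator decomposition $T_{k-1}^{\infty}(\mu_1,A)=\sum_{j=0}^{k-1}\frac{\mu_1^{j}}{j!}T_{k-1-j}(A)$; instead it works in an eigenbasis of $A$, writes the $i$-th eigenvalue of $T_{k-1}^{\infty}$ as $t_i=\sigma_{k-1,i}^{\infty}(\mu_1,\mu)=\sum_{j}\frac{\mu_1^{j}}{j!}\sigma_{k-1-j,i}(\mu)$, and computes $\operatorname{tr}(T_{k-1}^{\infty}\circ\nabla_v A)=\sum_i t_i\,\nabla_v\lambda_i$ directly, using $\sum_i\sigma_{m,i}\nabla_v\lambda_i=\nabla_v\sigma_{m+1}$ before adding and subtracting the $\nabla_v\mu_1$ term. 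You instead cite Reilly's formula $\nabla_v\sigma_m(A)=\operatorname{tr}(T_{m-1}(A)\nabla_v A)$ as a black box and prove the operator identity by the double-sum reindexing, which checks out (eigenvalue by eigenvalue both sides have $i$-th eigenvalue $\sum_j\frac{\mu_1^j}{j!}\sigma_{k-1-j,i}$, or by your direct manipulation of the polynomial form of $T_{k-1}^{\infty}$). What your route buys is a clean, basis-free statement and a reusable identity relating the weighted and classical Newton transformations; what the paper's route buys is self-containedness, since the eigenvalue computation is exactly the proof of the classical formula you are invoking. One small point worth making explicit in your write-up: the chain-rule step treats $\mu_1$ and $A$ as independently varying along $v$, which is legitimate here since $\mu_1=\langle\nabla f,\nu\rangle$ is just a scalar function on $M$, but it deserves a sentence.
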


\begin{proof}
The computations will be in a basis that diagonalizes $A$. Let $A=\left( 
\begin{array}{ccc}
\lambda _{1} &  &  \\ 
& . &  \\ 
&  & \lambda _{n}%
\end{array}%
\right) .$ Since the eigenvalues of $T_{k-1}^{\infty }$ are given by 
\begin{eqnarray*}
t_{i} &=&\sigma _{k-1}^{\infty }\left( \mu _{1},\lambda _{1},...,\lambda
_{i-1},\lambda _{i+1},...,\lambda _{n}\right) \\
&=&\dsum\limits_{j=0}^{k-1}\frac{\mu _{1}^{j}}{j!}\sigma _{k-1-j}\left(
\lambda _{1},...,\lambda _{i-1},\lambda _{i+1},...,\lambda _{n}\right) \\
&=&\dsum\limits_{j=0}^{k-1}\frac{\mu _{1}^{j}}{j!}\frac{\partial }{\partial
\lambda _{i}}\sigma _{k-1-j}\left( \lambda _{1},...,\lambda _{n}\right) \\
&=&\dsum\limits_{j=0}^{k-1}\frac{\mu _{1}^{j}}{j!}\dsum\limits_{i\neq
i_{j},\ i_{1}<...<i_{k-1-j}}\lambda _{i_{1}}...\lambda _{i_{k-1-j}}.
\end{eqnarray*}%
So%
\begin{equation*}
tr\left( T_{k-1}^{\infty }\circ \nabla _{v}A\right)
=\dsum\limits_{i=1}^{n}t_{i}\nabla _{v}\lambda _{i}
\end{equation*}%
\begin{eqnarray*}
&=&\dsum\limits_{j=0}^{k}\frac{\mu _{1}^{j}}{j!}\nabla
_{v}\dsum\limits_{i_{1}<...<i_{k-j}}\lambda _{i_{1}}...\lambda _{i_{k-j}} \\
&=&\nabla _{v}\sigma _{k}^{\infty }(\mu _{1},A)-\nabla _{v}\mu
_{1}\dsum\limits_{j=0}^{k-1}\frac{\mu _{1}^{j}}{j!}\sigma _{k-1-j}\left(
A\right) \\
&=&\nabla _{v}\sigma _{k}^{\infty }(\mu _{1},A)-\left( \nabla _{v}\mu
_{1}\right) \sigma _{k-1}^{\infty }\left( \mu _{1},A\right) .
\end{eqnarray*}
\end{proof}

\begin{lemma}
\label{lem1} The weighted divergence of the weighted Newton transformations $%
T_{k}^{\infty }$ are inductively given by the following formula 
\begin{equation*}
\func{div}_{f}T_{0}^{\infty }=\triangledown f
\end{equation*}%
and%
\begin{equation}
\func{div}_{f}T_{k}^{\infty }=\sigma _{k}^{\infty }\triangledown f+\sigma
_{k-1}^{\infty }\left( \mu _{1},A\right) \nabla \mu _{1}-A\func{div}%
_{f}T_{k-1}^{\infty }-\sum_{i=1}^{n}\left( \overline{R}(N,T_{k-1}^{\infty
}(e_{i}))e_{i}\right) ^{\top }\text{\ \ \ \ \ for }k\geq 1\text{.}  \notag
\end{equation}
\end{lemma}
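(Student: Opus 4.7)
The plan is induction on $k$. For the base case $k=0$, we have $T_0^\infty = I$, whence $\operatorname{div}(I) = 0$ and the definition of $\operatorname{div}_f$ immediately contributes only the derivative of the weight $e^{-f}$, producing $\operatorname{div}_f T_0^\infty = \nabla f$ (interpreting the sign convention so that the base case matches).

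For the inductive step I would use the recursion $T_k^\infty = \sigma_k^\infty I - A\,T_{k-1}^\infty$ to decompose
\begin{equation*}
\operatorname{div}(T_k^\infty) = \operatorname{div}(\sigma_k^\infty I) - \operatorname{div}(A\, T_{k-1}^\infty).
\end{equation*}
The first piece is just $\nabla \sigma_k^\infty$. For the second, the Leibniz rule for a composition of $(1,1)$-tensors yields
\begin{equation*}
\operatorname{div}(A\, T_{k-1}^\infty) = \sum_{i=1}^n (\nabla_{e_i} A)(T_{k-1}^\infty e_i) + A\operatorname{div}(T_{k-1}^\infty).
\end{equation*}

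The main step is to rewrite $\sum_i (\nabla_{e_i} A)(T_{k-1}^\infty e_i)$ as a gradient term plus a curvature term. Pairing it against an arbitrary tangent $v$, I would exploit self-adjointness of $A$, $\nabla_v A$, and $T_{k-1}^\infty$ to shuffle operators across the inner product, and then invoke the Codazzi equation $(\nabla_{e_i} A)v = (\nabla_v A)e_i + (\overline{R}(e_i,v)N)^{\top}$. The expression then splits into two sums: the first is $\operatorname{tr}(T_{k-1}^\infty \circ \nabla_v A)$, which by the preceding Lemma \ref{lem1'} equals $\langle \nabla \sigma_k^\infty - \sigma_{k-1}^\infty(\mu_1, A)\nabla \mu_1,\, v\rangle$; the second, after applying the symmetry $\langle \overline{R}(X,Y)Z,W\rangle = \langle \overline{R}(Z,W)X,Y\rangle$, becomes $\langle \sum_i (\overline{R}(N, T_{k-1}^\infty e_i)e_i)^{\top},\, v\rangle$. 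Substituting back, the $\nabla \sigma_k^\infty$ contributions from $\operatorname{div}(\sigma_k^\infty I)$ and the Codazzi term cancel, giving
\begin{equation*}
\operatorname{div}(T_k^\infty) = \sigma_{k-1}^\infty(\mu_1, A)\nabla \mu_1 - A\operatorname{div}(T_{k-1}^\infty) - \sum_{i=1}^n (\overline{R}(N, T_{k-1}^\infty e_i)e_i)^{\top}.
\end{equation*}

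To pass from $\operatorname{div}$ to $\operatorname{div}_f$, the definition of the weighted divergence gives $\operatorname{div}_f T = \operatorname{div}(T) + T(\nabla f)$. Applied to $T_k^\infty$ this produces a contribution $\sigma_k^\infty \nabla f - A T_{k-1}^\infty(\nabla f)$; regrouping $A\operatorname{div}(T_{k-1}^\infty) + A T_{k-1}^\infty(\nabla f) = A\operatorname{div}_f(T_{k-1}^\infty)$ recovers exactly the claimed recursion. The main obstacle is the Codazzi manipulation and the clean algebraic identification of the trace term via Lemma \ref{lem1'}: one must carefully track that $\nabla_v A$ is self-adjoint, that $T_{k-1}^\infty$ commutes with $A$, and that the symmetries of the ambient curvature tensor land the correction term in the tangent bundle. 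Once these ingredients are aligned, the remainder of the argument is substitution and regrouping using the inductive hypothesis and the recursive definition of $T_k^\infty$.
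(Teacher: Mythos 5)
Your proposal follows essentially the same route as the paper: the same decomposition via the recursion $T_{k}^{\infty }=\sigma _{k}^{\infty }I-AT_{k-1}^{\infty }$, the same Codazzi/self-adjointness manipulation producing the trace term and the ambient curvature term, the same appeal to Lemma \ref{lem1'} to cancel the $\nabla \sigma _{k}^{\infty }$ contributions, and the same final regrouping to pass from $\operatorname{div}$ to $\operatorname{div}_{f}$. Your caveat about the sign in $\operatorname{div}_{f}T=\operatorname{div}T+T(\nabla f)$ is apt: the paper's own intermediate computation of $e^{f}\operatorname{div}(e^{-f}T_{k}^{\infty })$ yields $\operatorname{div}T_{k}^{\infty }-T_{k}^{\infty }(\nabla f)$, yet its concluding step (and the stated base case $\operatorname{div}_{f}T_{0}^{\infty }=\nabla f$) uses the opposite sign, so the convention you adopted is the one the statement actually requires.
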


\begin{proof}
We have%
\begin{eqnarray*}
\func{div}_{f}T_{k}^{\infty } &=&e^{f}\func{div}\left( e^{-f}T_{k}^{\infty
}\right) \\
&=&e^{f}\overset{n}{\underset{i=1}{\sum }}\left[ \triangledown
_{e_{i}}(e^{-f}T_{k}^{\infty })(e_{i})\right] \\
&=&e^{f}\overset{n}{\underset{i=1}{\sum }}\left[ \triangledown
_{e_{i}}(e^{-f}T_{k}^{\infty }(e_{i}))-e^{-f}T_{k}^{\infty }(\triangledown
_{e_{i}}e_{i})\right] \\
&=&e^{f}\overset{n}{\underset{i=1}{\sum }}\left[ e^{-f}\triangledown
_{e_{i}}(T_{k}^{\infty }(e_{i}))+e_{i}\left( e^{-f}\right) T_{k}^{\infty
}(e_{i})-e^{-f}T_{k}^{\infty }(\triangledown _{e_{i}}e_{i})\right] \\
&=&e^{f}\overset{n}{\underset{i=1}{\sum }}\left[ e^{-f}\triangledown
_{e_{i}}(T_{k}^{\infty }(e_{i}))-e^{-f}\left\langle \triangledown
f,e_{i}\right\rangle T_{k}^{\infty }(e_{i})-e^{-f}T_{k}^{\infty
}(\triangledown _{e_{i}}e_{i})\right] \\
&=&\func{div}T_{k}^{\infty }-T_{k}^{\infty }\left( \triangledown f\right)
\end{eqnarray*}%
It is not difficult to see that 
\begin{equation*}
\text{div}_{M}T_{k}^{\infty }=\nabla \sigma _{k}^{\infty }-A_{k}\text{div}%
_{M}T_{k-1}^{\infty }-\sum_{i=1}^{n}(\nabla _{e_{i}}A_{k})(T_{k-1}^{\infty
}(e_{i}))\text{.}
\end{equation*}%
Indeed, 
\begin{eqnarray*}
(\nabla _{e_{i}}T_{k}^{\infty })(e_{i}) &=&\nabla _{e_{i}}(T_{k}^{\infty
}(e_{i}))-T_{k}^{\infty }(\nabla _{e_{i}}e_{i}) \\
&=&\nabla _{e_{i}}((\sigma _{k}^{\infty }I-AT_{k-1}^{\infty })e_{i})-(\sigma
_{k}^{\infty }I-AT_{k-1}^{\infty })(\nabla _{e_{i}}e_{i}) \\
&=&\nabla _{e_{i}}(\sigma _{k}^{\infty }e_{i})-\nabla
_{e_{i}}(AT_{k-1}^{\infty }(e_{i}))-\sigma _{k}^{\infty }(\nabla
_{e_{i}}e_{i})+(AT_{k-1}^{\infty })(\nabla _{e_{i}}e_{i}) \\
&=&e_{i}(\sigma _{k}^{\infty })e_{i}+\sigma _{k}^{\infty }(\nabla
_{e_{i}}e_{i})-\nabla _{e_{i}}(AT_{k-1}^{\infty }(e_{i}))-\sigma
_{k}^{\infty }(\nabla _{e_{i}}e_{i})+(AT_{k-1}^{\infty })(\nabla
_{e_{i}}e_{i}) \\
&=&e_{i}(\sigma _{k}^{\infty })e_{i}-(\nabla _{e_{i}}(AT_{k-1}^{\infty
}(e_{i}))-(AT_{k-1}^{\infty })(\nabla _{e_{i}}e_{i})) \\
&=&(d_{e_{i}}\sigma _{k}^{\infty })(e_{i})-(\nabla _{e_{i}}(AT_{k-1}^{\infty
}))e_{i} \\
&=&\langle \nabla \sigma _{k}^{\infty },e_{i}\rangle e_{i}-(\nabla
_{e_{i}}A)(T_{k-1}^{\infty }(e_{i}))-A((\nabla _{e_{i}}T_{k-1}^{\infty
})(e_{i}))
\end{eqnarray*}%
Thus, 
\begin{eqnarray*}
\text{div}_{M}T_{k}^{\infty } &=&\sum_{i=1}^{n}(\nabla _{e_{i}}T_{k}^{\infty
})(e_{i}) \\
&=&\sum_{i=1}^{n}\langle \nabla \sigma _{k}^{\infty },e_{i}\rangle
e_{i}-\sum_{i=1}^{n}(\nabla _{e_{i}}A)(T_{k-1}^{\infty
}(e_{i}))-\sum_{i=1}^{n}A((\nabla _{e_{i}}T_{k-1}^{\infty })(e_{i})) \\
&=&\nabla \sigma _{k}^{\infty }-Adiv_{M}T_{k-1}^{\infty
}-\sum_{i=1}^{n}(\nabla _{e_{i}}A)(T_{k-1}^{\infty }(e_{i}))
\end{eqnarray*}%
The Godazzi equation and the fact that $\nabla _{e_{i}}A$ is a self-ajoint
operator allow us to write, \ 
\begin{equation*}
\dsum\limits_{i=1}^{n}\langle (\nabla _{e_{i}}A)T_{k-1}^{\infty
}(e_{i}),v\rangle =\langle \sum_{i=1}^{n}\left( \overline{R}%
(N,T_{k-1}^{\infty }(e_{i}))e_{i}\right) ^{\top },v\rangle
+trace(T_{k-1}^{\infty }\circ \nabla _{v}A)
\end{equation*}%
where $v$ is an arbitrary vector tangent $M$.

Thus,%
\begin{equation*}
\left\langle div_{M}T_{k}^{\infty },v\right\rangle =\langle \nabla \sigma
_{k}^{\infty },v\rangle -\langle Adiv_{M}T_{k-1}^{\infty },v\rangle -\langle
\sum_{i=1}^{n}\left( \overline{R}(N,T_{k-1}^{\infty }(e_{i}))e_{i}\right)
^{\top },v\rangle -tr(T_{k-1}^{\infty }\circ \nabla _{v}A)
\end{equation*}%
Using now Lemma \ref{lem1'}%
\begin{equation*}
tr(T_{k-1}^{\infty }\circ \nabla _{v}A)=\left\langle \nabla \sigma
_{k}^{\infty }(\mu _{1},A)-\left( \nabla \mu _{1}\right) \sigma
_{k-1}^{\infty }\left( \mu _{1},A\right) ,v\right\rangle
\end{equation*}%
we have,%
\begin{equation*}
\left\langle div_{M}T_{k}^{\infty },v\right\rangle =\sigma _{k-1}^{\infty
}\left( \mu _{1},A\right) \left\langle \nabla \mu _{1},v\right\rangle
-\langle Adiv_{M}T_{k-1}^{\infty },v\rangle -\langle \sum_{i=1}^{n}\left( 
\overline{R}(N,T_{k-1}^{\infty }(e_{i}))e_{i}\right) ^{\top },v\rangle
\end{equation*}%
or equivalently,%
\begin{equation*}
div_{M}T_{k}^{\infty }=\sigma _{k-1}^{\infty }\left( \mu _{1},A\right)
\nabla \mu _{1}-Adiv_{M}T_{k-1}^{\infty }-\sum_{i=1}^{n}\left( \overline{R}%
(N,T_{k-1}^{\infty }(e_{i}))e_{i}\right) ^{\top }.
\end{equation*}%
Finally%
\begin{eqnarray*}
\func{div}_{f}T_{k}^{\infty } &=&T_{k}^{\infty }\left( \triangledown
f\right) +\sigma _{k-1}^{\infty }\left( \mu _{1},A\right) \nabla \mu
_{1}-Adiv_{M}T_{k-1}^{\infty }-\sum_{i=1}^{n}\left( \overline{R}%
(N,T_{k-1}^{\infty }(e_{i}))e_{i}\right) ^{\top } \\
&=&\left( \sigma _{k}^{\infty }I-AT_{k-1}^{\infty }\right) \left(
\triangledown f\right) +\sigma _{k-1}^{\infty }\left( \mu _{1},A\right)
\nabla \mu _{1}-A\left( \func{div}_{f}T_{k-1}^{\infty }-T_{k-1}^{\infty
}\left( \triangledown f\right) \right) -\sum_{i=1}^{n}\left( \overline{R}%
(N,T_{k-1}^{\infty }(e_{i}))e_{i}\right) ^{\top } \\
&=&\sigma _{k}^{\infty }\triangledown f+\sigma _{k-1}^{\infty }\left( \mu
_{1},A\right) \nabla \mu _{1}-\left( AT_{k-1}^{\infty }\right) \left(
\triangledown f\right) -A\left( \func{div}_{f}T_{k-1}^{\infty }\right)
+\left( AT_{k-1}^{\infty }\right) \left( \triangledown f\right)
-\sum_{i=1}^{n}\left( \overline{R}(N,T_{k-1}^{\infty }(e_{i}))e_{i}\right)
^{\top } \\
&=&\sigma _{k}^{\infty }\triangledown f+\sigma _{k-1}^{\infty }\left( \mu
_{1},A\right) \nabla \mu _{1}-A\func{div}_{f}T_{k-1}^{\infty
}-\sum_{i=1}^{n}\left( \overline{R}(N,T_{k-1}^{\infty }(e_{i}))e_{i}\right)
^{\top }\text{.}
\end{eqnarray*}%
Which achieves the proof of Lemma \ref{lem1}.
\end{proof}

\begin{corollary}
\label{cor1} If $\overline{M}$ has constant sectional curvature, then 
\begin{equation*}
\func{div}_{f}T_{k}^{\infty }=T_{k}^{\infty }\left( \triangledown f\right)
+\sigma _{k-1}^{\infty }\left( \mu _{1},A\right) \nabla \mu _{1}.
\end{equation*}
\end{corollary}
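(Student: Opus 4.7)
The plan is to deduce the corollary directly from Lemma \ref{lem1} by observing that the curvature sum vanishes when $\overline{M}$ has constant sectional curvature, and then unwinding the remaining recurrence by induction on $k$.

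First I would handle the curvature term. If $\overline{M}$ has constant sectional curvature $c$, the curvature tensor takes the standard form
\begin{equation*}
\overline{R}(X,Y)Z = c\bigl(\langle Y,Z\rangle X - \langle X,Z\rangle Y\bigr).
\end{equation*}
Setting $X=N$, $Y=T_{k-1}^{\infty}(e_i)$, $Z=e_i$, and using that $T_{k-1}^{\infty}$ preserves the tangent bundle while $N\perp T_pM$, one finds
\begin{equation*}
\overline{R}\bigl(N,T_{k-1}^{\infty}(e_i)\bigr)e_i = c\,\langle T_{k-1}^{\infty}(e_i),e_i\rangle\,N,
\end{equation*}
whose tangential component vanishes. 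Thus $\sum_{i=1}^{n}\bigl(\overline{R}(N,T_{k-1}^{\infty}(e_i))e_i\bigr)^{\top}=0$, and Lemma \ref{lem1} collapses to
\begin{equation*}
\operatorname{div}_{f}T_{k}^{\infty} = \sigma_{k}^{\infty}\nabla f + \sigma_{k-1}^{\infty}(\mu_1,A)\nabla\mu_1 - A\operatorname{div}_{f}T_{k-1}^{\infty}.
\end{equation*}

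Next I would run an induction on $k$. The base case $k=0$ is immediate since $\operatorname{div}_{f}T_{0}^{\infty}=\nabla f=T_{0}^{\infty}(\nabla f)$ and the correction term vanishes by the convention $\sigma_{-1}^{\infty}=0$. For the inductive step, assume $\operatorname{div}_{f}T_{k-1}^{\infty}=T_{k-1}^{\infty}(\nabla f)+G_{k-1}$ where $G_{k-1}$ is the accumulated $\nabla\mu_1$-correction. Substituting and using the defining recursion $T_{k}^{\infty}=\sigma_{k}^{\infty}I-AT_{k-1}^{\infty}$, the terms $\sigma_{k}^{\infty}\nabla f$ and $-AT_{k-1}^{\infty}(\nabla f)$ combine to produce precisely $T_{k}^{\infty}(\nabla f)$, leaving the desired correction term alongside.

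The main obstacle is bookkeeping: one has to track the $\nabla\mu_1$-contributions through the recursion and verify that they package into the advertised form $\sigma_{k-1}^{\infty}(\mu_1,A)\nabla\mu_1$. This requires invoking Lemma \ref{lem1'} implicitly, and recognising that the iteration $G_k=\sigma_{k-1}^{\infty}(\mu_1,A)\nabla\mu_1-AG_{k-1}$ with $G_0=0$ fits exactly the Newton-type recursion defining the weighted transformations. Once this algebraic identification is made, the corollary follows without further geometric input.
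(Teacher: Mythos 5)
Your overall strategy is the same as the paper's: kill the curvature sum using the constant-curvature form of $\overline{R}$ (your computation that $\overline{R}(N,T_{k-1}^{\infty}(e_i))e_i$ is purely normal is correct and is in fact more detailed than the paper, which simply asserts $\left(\overline{R}(N,T_{k-1}^{\infty}(e_i))e_i\right)^{\top}=0$), and then unwind the surviving recursion from Lemma \ref{lem1}. The $\nabla f$ part of your induction is also fine: $\sigma_{k}^{\infty}\nabla f-AT_{k-1}^{\infty}(\nabla f)=T_{k}^{\infty}(\nabla f)$ by the defining recursion, exactly as you say.

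The gap is in your last step, and it is a real one. The correction terms satisfy $G_{0}=0$ and $G_{k}=\sigma_{k-1}^{\infty}(\mu_{1},A)\nabla\mu_{1}-AG_{k-1}$, as you write; but when you close this recursion against the Newton-type recursion $T_{k}^{\infty}=\sigma_{k}^{\infty}I-AT_{k-1}^{\infty}$, what you actually get is the \emph{operator} $T_{k-1}^{\infty}$ applied to the vector $\nabla\mu_{1}$, namely $G_{k}=T_{k-1}^{\infty}(\nabla\mu_{1})$, not the \emph{scalar} multiple $\sigma_{k-1}^{\infty}(\mu_{1},A)\nabla\mu_{1}$ asserted in the corollary. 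Concretely, $G_{1}=\nabla\mu_{1}$ agrees with the statement, but already $G_{2}=\sigma_{1}^{\infty}\nabla\mu_{1}-A\nabla\mu_{1}=T_{1}^{\infty}(\nabla\mu_{1})$, which differs from $\sigma_{1}^{\infty}\nabla\mu_{1}$ by $-A\nabla\mu_{1}$, a term that does not vanish in general. So the "algebraic identification" you invoke does not deliver the advertised form; it delivers $\operatorname{div}_{f}T_{k}^{\infty}=T_{k}^{\infty}(\nabla f)+T_{k-1}^{\infty}(\nabla\mu_{1})$. You should either prove the corollary in that corrected form or explain why $A\nabla\mu_{1}$-terms can be discarded; as written, the final sentence asserts an identity that fails for $k\geq 2$. (To be fair, the paper's own proof hides this same issue behind the phrase "results by a recursive argument," so the discrepancy is arguably with the statement of the corollary itself; but your proof, by making the recursion explicit, is obligated to confront it.)
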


\begin{proof}
\textit{\ If }$\overline{M}$ has constant sectional curvature, then $\left( 
\overline{R}(N,T_{k-1}^{\infty }(e_{i}))e_{i}\right) ^{\top }=0,$ and we have%
\textit{\ }%
\begin{equation*}
\func{div}_{f}T_{k}^{\infty }=\sigma _{k}^{\infty }\triangledown f-A\func{div%
}_{f}T_{k-1}^{\infty }+\sigma _{k-1}^{\infty }\left( \mu _{1},A\right)
\nabla \mu _{1}.
\end{equation*}%
The desired relation results by a recursive argument.
\end{proof}

\section{Main results}

The aim of this part is to derive an integral formula on weighted manifolds
with constant sectional curvature and to give some of its geometric
applications. The method is based on the computation of the weighted
divergence $\func{div}_{f}\left( T_{k}^{\infty }Y^{\intercal }\right) $ and $%
\left\langle \func{div}_{f}T_{k}^{\infty },Y\right\rangle $, where $Y$ is a
conformal vector field. To do so, we first consider the following geometric
configuration: let $\overline{M}^{n+1}$ be an oriented Riemannian manifold
with metric $\left\langle ,\right\rangle $, $P^{n}\subset \overline{M}^{n+1}$
an oriented connected submanifold of $\overline{M}^{n+1}$and $\Sigma
^{n-1}\subset P^{n}$ a compact hypersurface of $P^{n}.$ Let $\varphi
:M^{n}\longrightarrow \overline{M}^{n+1}$ be a compact oriented hypersurface
of boundary $\partial M$.

Let $p\in \Sigma ^{n-1}$ and $\left\{ e_{1},...,e_{n-1}\right\} $ an
orthonormal basis of $T_{p}\Sigma ^{n-1}$. We can choose a global vector
field $\nu $ such that $\left\{ e_{1},...,e_{n-1},\nu (p)\right\} $ is an
orthonormal basis of $T_{p}M^{n}$. Let $N$ be the globally vector normal to $%
M^{n}$, then $\left\{ e_{1},...,e_{n-1},\nu (p),N\right\} $ is an
orthonormal basis of $T_{p}\overline{M}^{n+1}$\bigskip . Suppose now the
existence of a closed conformal vector field $Y$ on $\overline{M}^{n+1}$;
that is to say there exists a $\phi \in C^{\infty }(\overline{M}^{n+1})$
such that 
\begin{equation}
\overline{\triangledown }_{V}Y=\phi V  \notag
\end{equation}%
for every vector fields $V$ over $\overline{M}^{n+1}.$

If $\left\{ e_{1},...,e_{n}\right\} $ is an orthonormal basis of $T_{p}M^{n}$
that diagonalizes $A,$then 
\begin{eqnarray*}
\left\langle \func{div}_{f}T_{k}^{\infty },Y\right\rangle &=&\left\langle
e^{f}\func{div}\left( e^{-f}T_{k}^{\infty }\right) ,Y\right\rangle \\
&=&e^{f}\func{div}\left( e^{-f}T_{k}^{\infty }Y\right) -\underset{i=0}{%
\overset{n}{\dsum }}\left\langle T_{k}^{\infty }\left( e_{i}\right)
,\triangledown _{e_{i}}Y\right\rangle \\
&=&e^{f}\func{div}\left( e^{-f}T_{k}^{\infty }Y\right) -\underset{i=0}{%
\overset{n}{\dsum }}\phi \left\langle T_{k}^{\infty }\left( e_{i}\right)
,e_{i}\right\rangle \\
&=&\func{div}_{f}\left( T_{k}^{\infty }Y\right) -\phi trT_{k}^{\infty }.
\end{eqnarray*}%
And in virtue of formula \ref{2'} we have 
\begin{eqnarray*}
trT_{k}^{\infty } &=&n\sigma _{k}^{\infty }(\mu _{0},\mu )-tr\left(
AT_{k-1}^{\infty }(\mu _{0},\mu )\right) \\
&=&\left( n-k\right) \sigma _{k}^{\infty }(\mu _{0},\mu )+\mu _{0}\sigma
_{k-1}^{\infty }(\mu _{0},\mu ) \\
&=&\left( n-k\right) \left( 
\begin{array}{c}
n \\ 
k%
\end{array}%
\right) H_{k,f}+\left\langle \nu ,\triangledown f\right\rangle \left( 
\begin{array}{c}
n \\ 
k-1%
\end{array}%
\right) H_{k-1,f}.
\end{eqnarray*}

So,%
\begin{equation}
\func{div}_{f}\left( T_{k}^{\infty }Y\right) =\left\langle \func{div}%
_{f}T_{k}^{\infty },Y\right\rangle +\phi \left(
c_{k}H_{k,f}+c_{k-1}\left\langle \nu ,\triangledown f\right\rangle
H_{k-1,f}\right)  \notag
\end{equation}

where $c_{k}=(n-k)\dbinom{n}{k}$ and $c_{k-1}=n\dbinom{n}{k-1}$

Integrating the two sides of this latter equality and applying the
divergence theorem, we obtain for $1\leq k\leq n-1$,%
\begin{align*}
\underset{M^{n}}{\int }\func{div}_{f}\left( T_{k}^{\infty }Y\right) dv_{f}& =%
\underset{M^{n}}{\int }e^{-f}\func{div}_{f}\left( T_{k}^{\infty }Y\right) dv
\\
& =\underset{M^{n}}{\int }\func{div}\left( e^{-f}T_{k}^{\infty }Y\right) dv
\\
& =\underset{\partial M}{\int }e^{-f}\left\langle T_{k}^{\infty }Y,\nu
\right\rangle ds \\
& =\underset{\partial M}{\int }\left\langle T_{k}^{\infty }Y,\nu
\right\rangle ds_{f}.
\end{align*}

Hence,%
\begin{equation*}
\underset{\partial M}{\int }\left\langle T_{k}^{\infty }\nu ,Y\right\rangle
ds_{f}=\underset{M^{n}}{\int }\left\langle \func{div}_{f}T_{k}^{\infty
},Y\right\rangle dv_{f}+c_{k}\underset{M^{n}}{\int }\phi
H_{k,f}dv_{f}+c_{k-1}\underset{M^{n}}{\int }\phi \left\langle \nu
,\triangledown f\right\rangle H_{k-1,f}dv_{f}\text{.}
\end{equation*}%
Consequently, we have the following proposition

\begin{proposition}
\label{prop2} Let $\varphi :M^{n}\longrightarrow \overline{M}^{n+1}$ an
immersed compact oriented hypersurface of boundary $\partial M$. Denoting by 
$N$ \ the global vector fields normal to $M^{n}$, and $\nu $ the outward
pointing conormal vector field to $M^{n}$ along $\partial M$. Then for $%
1\leq k\leq n-1$ and for every closed conformal vector field $Y$ on $%
\overline{M}^{n+1}$, we have :%
\begin{equation*}
\underset{\partial M^{n}}{\int }\left\langle T_{k}^{\infty }\nu
,Y\right\rangle ds_{f}=\underset{M^{n}}{\int }\left\langle \func{div}%
_{f}T_{k}^{\infty },Y\right\rangle dv_{f}+c_{k}\underset{M^{n}}{\int }\phi
H_{k,f}dv_{f}+c_{k-1}\underset{M^{n}}{\int }\phi \left\langle \nu
,\triangledown f\right\rangle H_{k-1,f}dv_{f}.
\end{equation*}%
\ \ \ \ \ \ \ \ \ \ \ \ \ \ \ \ \ \ \ \ \ \ \ \ \ \ \ \ \ \ \ \ \ \ \ \ \ \
\ \ \ \ \ \ \ \ \ \ \ \ \ \ \ \ \ \ \ \ \ \ \ \ \ \ \ \ \ \ \ \ \ \ \ \ \ \
\ \ \ \ \ \ \ \ \ \ \ \ \ \ \ \ \ \ \ \ \ \ \ \ \ \ \ \ \ \ \ \ \ \ \ \ \ \
\ \ \ \ \ \ \ \ \ \ \ \ \ \ \ \ \ \ \ \ \ \ \ \ \ \ \ \ \ \ \ \ \ \ \ \ \ \
\ \ \ \ \ \ \ \ \ \ \ \ \ \ \ \ \ \ \ \ \ \ \ \ \ \ \ \ \ \ \ \ \ 
\end{proposition}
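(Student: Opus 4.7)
The plan is to derive the formula by computing $\operatorname{div}_f(T_k^\infty Y)$ in two different ways and then integrating over $M^n$. First I would fix a point $p \in M^n$ and choose an orthonormal frame $\{e_1,\dots,e_n\}$ of $T_pM^n$ that diagonalizes the shape operator $A$. Writing $\operatorname{div}_f T_k^\infty = e^f \operatorname{div}(e^{-f} T_k^\infty)$ and applying the Leibniz rule for divergence of the endomorphism-field $T_k^\infty$ paired with the vector field $Y$, I would obtain the pointwise identity
\begin{equation*}
\operatorname{div}_f(T_k^\infty Y) = \langle \operatorname{div}_f T_k^\infty, Y\rangle + \sum_{i=1}^{n}\langle T_k^\infty(e_i), \overline{\nabla}_{e_i} Y\rangle.
\end{equation*}

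Next I would exploit the closed conformal hypothesis $\overline{\nabla}_V Y = \phi V$: this turns the last sum into $\phi \operatorname{tr}(T_k^\infty)$, since $\sum_i \langle T_k^\infty(e_i), e_i\rangle = \operatorname{tr}(T_k^\infty)$. Then I would compute this trace using formula (\ref{2'}): writing $\operatorname{tr}(T_k^\infty) = n\sigma_k^\infty - \operatorname{tr}(AT_{k-1}^\infty) = (n-k)\sigma_k^\infty + \mu_0 \sigma_{k-1}^\infty$, and then substituting the definition of the weighted mean curvatures $\binom{n}{k}H_{k,f} = \sigma_k^\infty$ and $\mu_0 = \langle \nu, \nabla f\rangle$, I get
\begin{equation*}
\phi \operatorname{tr}(T_k^\infty) = \phi\bigl(c_k H_{k,f} + c_{k-1}\langle \nu, \nabla f\rangle H_{k-1,f}\bigr),
\end{equation*}
with $c_k = (n-k)\binom{n}{k}$ and $c_{k-1} = n\binom{n}{k-1}$.

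At this stage the pointwise identity reads
\begin{equation*}
\operatorname{div}_f(T_k^\infty Y) = \langle \operatorname{div}_f T_k^\infty, Y\rangle + \phi\bigl(c_k H_{k,f} + c_{k-1}\langle \nu, \nabla f\rangle H_{k-1,f}\bigr).
\end{equation*}
I would then integrate both sides against $dv_f = e^{-f}\, dv$. For the left-hand side I would apply the classical divergence theorem to the vector field $e^{-f}T_k^\infty Y$, yielding the boundary integral $\int_{\partial M} e^{-f}\langle T_k^\infty Y, \nu\rangle\, ds = \int_{\partial M}\langle T_k^\infty \nu, Y\rangle\, ds_f$ after using the self-adjointness of $T_k^\infty$. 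Rearranging produces the claimed identity.

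The only delicate points are verifying that $T_k^\infty$ is self-adjoint (so that $\langle T_k^\infty Y, \nu\rangle = \langle T_k^\infty \nu, Y\rangle$ on $\partial M$), and ensuring the correct conversion between the Riemannian divergence theorem and its weighted version; both follow directly from the definition of $\operatorname{div}_f$ and from the fact that $T_k^\infty$ is a polynomial in the self-adjoint operator $A$. I do not anticipate any substantive obstacle beyond bookkeeping of these algebraic identities, since the main analytic content (Stokes' theorem) is standard.
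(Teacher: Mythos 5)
Your proposal is correct and follows essentially the same route as the paper: expand $\operatorname{div}_f(T_k^\infty Y)$ by the Leibniz rule, use $\overline{\nabla}_V Y=\phi V$ to reduce the correction term to $\phi\,\mathrm{tr}(T_k^\infty)$, evaluate the trace via formula (\ref{2'}) and the definition of $H_{k,f}$, and finish with the divergence theorem applied to $e^{-f}T_k^\infty Y$. Your explicit remark that self-adjointness of $T_k^\infty$ is needed to pass from $\langle T_k^\infty Y,\nu\rangle$ to $\langle T_k^\infty\nu, Y\rangle$ on the boundary is a point the paper leaves implicit, but it is not a different argument.
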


If $\overline{M}^{n+1}$ has constant sectional curvature, we obtain by
Corollary \ref{cor},

\begin{proposition}
Under the hypothesis of the proposition \ref{prop2}, if $\overline{M}^{n+1}$
has constant sectional curvature, then 
\begin{equation}
\underset{\partial M}{\int }\left\langle T_{k}^{\infty }\nu ,Y\right\rangle
ds_{f}=\underset{M^{n}}{\int }\left\langle \nabla f,T_{k}^{\infty
}Y\right\rangle dv_{f}+c_{k}\int \phi H_{k,f}dv_{f}+c_{k-1}\int \left( \phi
\mu _{1}+\frac{1}{n}\left\langle \nabla \mu _{1},Y\right\rangle \right)
H_{k-1,f}dv_{f}
\end{equation}%
with $\mu _{1}=\left\langle \nabla f,\nu \right\rangle $.
\end{proposition}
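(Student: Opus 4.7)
The plan is to specialize Proposition \ref{prop2} by evaluating the term $\int_{M^{n}}\langle\func{div}_{f}T_{k}^{\infty},Y\rangle\,dv_{f}$ in the constant curvature setting, using Corollary \ref{cor1}. Since Proposition \ref{prop2} has already been established for a general ambient $\overline{M}^{n+1}$, the entire argument reduces to rewriting this one term.

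First I would invoke Corollary \ref{cor1}, which gives, in the constant sectional curvature case,
\begin{equation*}
\func{div}_{f}T_{k}^{\infty}=T_{k}^{\infty}(\nabla f)+\sigma_{k-1}^{\infty}(\mu_{1},A)\,\nabla\mu_{1},
\end{equation*}
with $\mu_{1}=\langle\nabla f,\nu\rangle$. Taking the inner product with $Y$ and using the self-adjointness of $T_{k}^{\infty}$, the first summand becomes $\langle T_{k}^{\infty}(\nabla f),Y\rangle=\langle\nabla f,T_{k}^{\infty}Y\rangle$, which produces the first term on the right-hand side of the claimed identity after integration against $dv_{f}$.

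Next I would translate the scalar factor $\sigma_{k-1}^{\infty}(\mu_{1},A)$ into the weighted mean curvature language. By the definition of $H_{k-1,f}$ together with the identification $\mu_{0}=\mu_{1}=\langle\nabla f,\nu\rangle$, we have
\begin{equation*}
\sigma_{k-1}^{\infty}(\mu_{1},A)=\binom{n}{k-1}H_{k-1,f}=\frac{c_{k-1}}{n}H_{k-1,f},
\end{equation*}
since $c_{k-1}=n\binom{n}{k-1}$. Hence the second summand contributes
\begin{equation*}
\int_{M^{n}}\sigma_{k-1}^{\infty}(\mu_{1},A)\,\langle\nabla\mu_{1},Y\rangle\,dv_{f}=\frac{c_{k-1}}{n}\int_{M^{n}}H_{k-1,f}\langle\nabla\mu_{1},Y\rangle\,dv_{f},
\end{equation*}
which is exactly the $\frac{1}{n}\langle\nabla\mu_{1},Y\rangle$ piece of the last integral in the target formula.

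Finally I would substitute these two computations into the identity from Proposition \ref{prop2} and combine the resulting $c_{k-1}\int\phi\mu_{1}H_{k-1,f}\,dv_{f}$ term (already present there, since $\langle\nu,\nabla f\rangle=\mu_{1}$) with the $\frac{c_{k-1}}{n}\int H_{k-1,f}\langle\nabla\mu_{1},Y\rangle\,dv_{f}$ term produced above, to obtain the single integral
\begin{equation*}
c_{k-1}\int_{M^{n}}\Bigl(\phi\mu_{1}+\tfrac{1}{n}\langle\nabla\mu_{1},Y\rangle\Bigr)H_{k-1,f}\,dv_{f}.
\end{equation*}
There is no real analytical obstacle here; the proof is essentially an algebraic consolidation. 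The only care needed is bookkeeping: correctly matching $\mu_{0}$ with $\mu_{1}$ in the definition of $H_{k-1,f}$, and correctly tracking the combinatorial factor so that $\binom{n}{k-1}$ exchanges with $c_{k-1}/n$. Once that identification is made, the claim follows directly from Proposition \ref{prop2} and Corollary \ref{cor1}.
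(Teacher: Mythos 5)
Your proposal is correct and is exactly the argument the paper intends: the paper gives no written proof beyond the remark that the result follows from Proposition \ref{prop2} via Corollary \ref{cor1}, and your substitution of $\func{div}_{f}T_{k}^{\infty}=T_{k}^{\infty}(\nabla f)+\sigma_{k-1}^{\infty}(\mu_{1},A)\nabla\mu_{1}$, together with the self-adjointness of $T_{k}^{\infty}$ and the identification $\sigma_{k-1}^{\infty}(\mu_{1},A)=\tbinom{n}{k-1}H_{k-1,f}=\tfrac{c_{k-1}}{n}H_{k-1,f}$, is precisely that omitted computation. The bookkeeping all checks out, so your write-up simply makes explicit what the paper leaves implicit.
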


\begin{corollary}
Under the hypothesis of the proposition \ref{prop2}, if $\overline{M}^{n+1}$
has constant sectional curvature and $f$ is constant then 
\begin{equation}
\underset{\partial M}{\int }\left\langle T_{k}\nu ,Y\right\rangle
ds_{f}=c_{k}\underset{M^{n}}{\int }\phi H_{k}dv_{f}.  \label{3}
\end{equation}%
where $T_{k}$ is the classical Newton transformation.
\end{corollary}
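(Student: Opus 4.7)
The plan is to deduce this corollary as a direct specialization of the preceding proposition to the case where $f$ is constant. I will take the formula
\begin{equation*}
\underset{\partial M}{\int}\left\langle T_{k}^{\infty}\nu,Y\right\rangle ds_{f}=\underset{M^{n}}{\int}\left\langle \nabla f,T_{k}^{\infty}Y\right\rangle dv_{f}+c_{k}\int \phi H_{k,f}dv_{f}+c_{k-1}\int \left(\phi\mu_{1}+\tfrac{1}{n}\left\langle \nabla\mu_{1},Y\right\rangle\right)H_{k-1,f}dv_{f}
\end{equation*}
and then show that under the extra assumption $f\equiv\text{const}$, the first and third integrals on the right vanish and the weighted objects on the left collapse to their classical analogues.

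The first step is to observe that $f$ constant forces $\nabla f=0$ on all of $M$, hence $\mu_{0}=\langle \nu,\nabla f\rangle=0$ and $\mu_{1}=\langle \nabla f,\nu\rangle\equiv 0$ identically on $M$, so that in particular $\nabla\mu_{1}=0$ as well. The second step is to feed these equalities back into the definitions: because $\mu_{0}=0$, the remark that $\sigma_{k}^{\infty}(0,\mu)=\sigma_{k}(\mu)$ made right after the recursive definition of $\sigma_{k}^{\infty}$ gives $\sigma_{k}^{\infty}=\sigma_{k}(A)$, whence the weighted Newton transformation reduces to the classical one, $T_{k}^{\infty}(0,A)=T_{k}(A)$, as already noted in the text. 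Substituting this into the definition of the weighted mean curvature yields
\begin{equation*}
\binom{n}{k}H_{k,f}=\sigma_{k}^{\infty}(0,A)=\sigma_{k}(A)=\binom{n}{k}H_{k},
\end{equation*}
so $H_{k,f}=H_{k}$, and correspondingly $H_{k-1,f}=H_{k-1}$.

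The third step is just bookkeeping: the first integral on the right vanishes because $\nabla f=0$, and the third vanishes because both $\mu_{1}$ and $\nabla\mu_{1}$ vanish; meanwhile the left-hand side becomes $\int_{\partial M}\langle T_{k}\nu,Y\rangle ds_{f}$. What remains is exactly the claimed identity $\int_{\partial M}\langle T_{k}\nu,Y\rangle ds_{f}=c_{k}\int_{M^{n}}\phi H_{k}dv_{f}$.

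There is no real obstacle here beyond keeping notations straight; the only thing that requires a moment of care is making sure that $\mu_{0}=\langle\nu,\nabla f\rangle$ and $\mu_{1}=\langle\nabla f,\nu\rangle$ both vanish simultaneously (they do, since they are literally the same quantity), so that both the $\mu_{0}$-dependence of $T_{k}^{\infty}$ and the $\mu_{1}$-dependence in the Lemma \ref{lem1'} correction term disappear at once. Note also that with $f$ constant, $dv_{f}=e^{-f}dv$ and $ds_{f}=e^{-f}ds$ differ from $dv$ and $ds$ only by a global constant, so the statement is equivalent to the usual flux formula of Alias--de Lira--Malacarne, as expected.
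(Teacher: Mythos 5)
Your proof is correct and follows exactly the route the paper intends: the corollary is stated as an immediate specialization of the preceding proposition, and your verification that $\nabla f=0$ kills the first and third integrals while $\mu_{0}=0$ collapses $T_{k}^{\infty}$, $\sigma_{k}^{\infty}$ and $H_{k,f}$ to their classical counterparts is precisely the intended argument. Nothing is missing.
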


If $f$ is constant, $H_{k}$ is a non zero constant and $Y$ is an homothetic
vector field, we can assume that $\phi =1,$ and get 
\begin{equation*}
c_{k}H_{k}vol(M^{n})=\underset{\partial M}{\int }\left\langle T_{k}^{\infty
}\nu ,Y\right\rangle ds
\end{equation*}%
or equivalently 
\begin{equation*}
vol\left( M^{n}\right) =\frac{1}{c_{k}H_{k}}\underset{\partial M}{\int }%
\left\langle T_{k}^{\infty }\nu ,Y\right\rangle ds.
\end{equation*}

\begin{proposition}
Let $\varphi :M^{n}\longrightarrow \overline{M}^{n+1}$ be an immersed
compact oriented hypersurface of boundary $\partial M$. Denoting by $N$ the
global vector fields normal to $M^{n}$, and $\nu $ the outward pointing unit
conormal vector field to $M^{n}$ along $\partial M$. Suppose that $\overline{%
M}^{n+1}$ is of constant sectional curvature, $f$ is constant and the $%
k^{th} $ mean curvature $H_{k}$ is non zero constant. Then for $1\leq k\leq
n-1$ and for every homothetic vector field $Y\in \overline{M}^{n+1})$, we
have 
\begin{equation}
vol\left( M^{n}\right) =\frac{1}{c_{k}H_{k}}\underset{\partial M}{\int }%
\left\langle T_{k}^{\infty }\nu ,Y\right\rangle ds.  \label{4}
\end{equation}
\end{proposition}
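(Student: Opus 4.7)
The plan is to derive this proposition as an almost immediate consequence of the previous corollary (equation \ref{3}). Since $f$ is assumed constant, we have $\mu_{0} = \langle \nabla f, \nu\rangle = 0$, so the weighted symmetric functions $\sigma_{k}^{\infty}$ collapse to the classical $\sigma_{k}$, and the weighted Newton transformation $T_{k}^{\infty}$ coincides with the classical $T_{k}$. Moreover, $dv_{f}=e^{-f}dv$ and $ds_{f}=e^{-f}ds$ differ from $dv$ and $ds$ only by the multiplicative constant $e^{-f}$, which will cancel from both sides of the identity.

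First I would invoke the corollary for constant $f$, which gives
\begin{equation*}
\underset{\partial M}{\int }\left\langle T_{k}\nu ,Y\right\rangle ds_{f}=c_{k}\underset{M^{n}}{\int }\phi H_{k}dv_{f}.
\end{equation*}
Next, I would use the hypothesis that $Y$ is homothetic on $\overline{M}^{n+1}$. By definition this means $\overline{\nabla}_{V}Y = \phi V$ with $\phi$ a (nonzero) constant on $\overline{M}^{n+1}$, so after replacing $Y$ by $Y/\phi$ (equivalently, rescaling the homothety factor to $1$) I may assume $\phi \equiv 1$. Then the right-hand side becomes $c_{k}H_{k}\int_{M^{n}}dv_{f}$, and since $H_{k}$ is a nonzero constant it pulls out of the integral.

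Finally I would cancel the global constant $e^{-f}$ appearing on both sides (so that $\int_{M^{n}}dv_{f} = e^{-f}\,\mathrm{vol}(M^{n})$ and $\int_{\partial M}\langle T_{k}\nu,Y\rangle ds_{f} = e^{-f}\int_{\partial M}\langle T_{k}^{\infty}\nu,Y\rangle ds$), obtaining
\begin{equation*}
c_{k}H_{k}\,\mathrm{vol}(M^{n}) = \underset{\partial M}{\int }\left\langle T_{k}^{\infty }\nu ,Y\right\rangle ds,
\end{equation*}
and dividing by the nonzero constant $c_{k}H_{k}$ yields the desired identity \eqref{4}.

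There is essentially no main obstacle: the work has already been done in Proposition \ref{prop2} and its corollary. The only small point to verify is the identification $T_{k}^{\infty}=T_{k}$ when $f$ is constant (so that writing $T_{k}^{\infty}\nu$ in the boundary integral is consistent) and the bookkeeping of the $e^{-f}$ factor, both of which are routine.
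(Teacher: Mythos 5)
Your proposal is correct and follows essentially the same route as the paper: specialize the preceding corollary (formula \eqref{3}) to constant $f$, use the homothety to take $\phi\equiv 1$, pull out the nonzero constant $H_{k}$, and cancel the constant weight $e^{-f}$ from both sides (the paper does this last step implicitly). Your remarks on the identification $T_{k}^{\infty}=T_{k}$ when $\mu_{0}=\langle\nabla f,\nu\rangle=0$ only make explicit what the paper leaves unsaid.
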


\begin{remark}
An estimate of the integrant in formula \ref{4} leads to an estimate of the
volume of $M^{n}$ by the area of its boundary$\ \partial M.$
\end{remark}

\end{document}